\documentclass[12pt]{article}

\title{Another proof of Seymour's 6-flow theorem}

\author{
   Matt DeVos\thanks{
     Email: {\tt mdevos@sfu.ca}. 
     Supported by an NSERC Discovery Grant (Canada)}
 \and
   Jessica McDonald\thanks{
  Email: {\tt mcdonald@auburn.edu}.   
	Supported in part by  Simons Foundation Grant \#845698  
  }
 \and
   Kathryn Nurse\thanks{
  Email: {\tt knurse@sfu.ca} 
}
}

\date{}

\pagestyle{myheadings}

\usepackage{fullpage}
\usepackage{graphicx}
\usepackage{amsfonts}
\usepackage{amsmath}
\usepackage{amsthm}
\usepackage{enumitem}

\theoremstyle{plain}
\newtheorem{theorem}{Theorem}

\theoremstyle{definition}


%
%
\newcommand\supp{\operatorname{supp}}



\begin{document}

\maketitle

\begin{abstract}
In 1981 Seymour proved his famous 6-flow theorem asserting that every 2-edge-connected graph has a nowhere-zero flow in the group ${\mathbb Z}_2 \times {\mathbb Z}_3$ (in fact, he offers two proofs of this result).  In this note we give a new short proof of a generalization of this theorem where ${\mathbb Z}_2 \times {\mathbb Z}_3$-valued functions are found subject to certain boundary constraints.
\end{abstract}

Throughout we permit loops and parallel edges.  Let $G = (V,E)$ be a digraph and let $v \in V$.  We define $\delta^+(v)$ $(\delta^-(v))$ to be the set of edges with tail (head) $v$.  Let $\Gamma$ be an abelian group written additively and let $\phi : E \rightarrow \Gamma$.  The \emph{boundary} of $\phi$ is the function $\partial \phi : V \rightarrow \Gamma$ given by the rule:
\[ \partial \phi(v) = \sum_{e \in \delta^+(v)} \phi(e) - \sum_{e \in \delta^-(v)} \phi(e). \]
Note that the condition $\sum_{v \in V} \partial \phi(v) = 0$ is always satisfied since every edge contributes 0 to this quantity.  
We say that $\phi$ is \emph{nowhere-zero} if $0 \not\in \phi(E)$ and we say that $\phi$ is a $\Gamma$-\emph{flow} if $\partial \phi$ is the constant 0 function.  Let us comment that reversing an edge and replacing the value assigned to this edge by its additive inverse preserves the boundary and maintains the condition nowhere-zero.  So, in particular, the question of when a graph has a nowhere-zero function $\phi : E \rightarrow \Gamma$ with a given boundary is independent of the orientation.  Setting ${\mathbb Z}_k = {\mathbb Z} / k {\mathbb Z}$ we may state Seymour's theorem as follows.

\begin{theorem}[Seymour \cite{PS}]
\label{6flow}
Every 2-edge-connected digraph has a nowhere-zero ${\mathbb Z}_6$-flow.  
\end{theorem}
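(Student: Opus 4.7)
The natural starting point is the isomorphism $\Z_6 \cong \Z_2 \times \Z_3$: a nowhere-zero $\Z_6$-flow on $G$ is the same data as a pair $(\phi_2,\phi_3)$ consisting of a $\Z_2$-flow and a $\Z_3$-flow whose supports together cover $E$. A $\Z_2$-flow is just an even subgraph, so equivalently one seeks an even subgraph $F \subseteq E$ such that the contracted graph $G/F$ admits a nowhere-zero $\Z_3$-flow. With this reformulation one is hunting for two compatible combinatorial substructures rather than a single group-valued function, and each of the two pieces is much better behaved than $\Z_6$ itself.

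To get a workable induction I would prove the boundary-enriched statement the abstract advertises: for every prescribed $\beta\colon V\to \Z_2\times\Z_3$ with $\sum_v \beta(v)=0$ (and under a suitable $2$-edge-connectivity / feasibility hypothesis) there is a nowhere-zero $\phi\colon E\to \Z_2\times\Z_3$ with $\partial\phi=\beta$. The $\beta\equiv 0$ case gives Theorem~\ref{6flow}. The reason to prove this stronger statement is that it is \emph{closed} under the operations one wants to perform in an inductive argument: when $G$ is split along a small edge-cut, or an edge or path is deleted, the leftover constraints can be repackaged as a new boundary on a strictly smaller graph, and the inductive hypothesis applies to each piece.

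With such a hypothesis in hand, I would carry out the familiar reductions: delete loops (assigning them any nonzero value), suppress degree-$2$ vertices, and split $G$ along any $2$-edge-cut $\{e_1,e_2\}$ by choosing values for $e_1,e_2$ and feeding the resulting boundaries into the two smaller sides. This leaves the essentially $3$-edge-connected case, which is where the real work must go. Here I would try to locate a small substructure---an edge whose deletion preserves $2$-edge-connectivity, or a short path or circuit that can be stripped off and its contribution absorbed into $\beta$---then invoke induction and extend.

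The step I expect to be the main obstacle is exactly this extension. The group $\Z_2\times\Z_3$ has only six elements, so the nowhere-zero constraint on the re-introduced edges leaves very little freedom: once the boundary values demanded at the endpoints are fixed, one may be forced into $0$ at some edge. The heart of the proof must therefore be a structural lemma guaranteeing, in the $3$-edge-connected case, a substructure whose endpoints allow enough independent choice in the $\Z_2$- and $\Z_3$-coordinates that a nowhere-zero extension is always available. Finding such a substructure---and showing it is always present---is where I would expect the novelty of the argument to lie.
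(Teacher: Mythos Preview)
Your high-level plan---decompose $\Z_6$ as $\Z_2\times\Z_3$ and prove a boundary-enriched statement by induction---matches the paper in spirit. But the specific strengthening you propose is not the one that works. Asking for $\partial\phi=\beta$ exactly is $\Z_6$-group-connectivity, and this is \emph{false} for $2$-edge-connected graphs: on the directed cycle $C_6$ with $\beta(v_i)=1$ for all $i$ (a zero-sum function in $\Z_6$), the edge values are forced to be $c,c+1,\dots,c+5$ for some $c$, hence hit~$0$. So your statement can only be rescued by first reducing to $3$-edge-connected graphs, where it becomes the Jaeger--Linial--Payan--Tarsi theorem you would then have to reprove. That is exactly the ``standard reduction to $3$-connected cubic graphs'' the present paper sets out to avoid, and the substructure you leave as ``the main obstacle'' is never supplied.

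The paper's actual strengthening (Theorem~\ref{funny6}) is subtler and is what makes the short induction possible. Rather than prescribing $\partial\phi_3$ exactly, it prescribes only its \emph{support} $U$ (together with $\supp(\partial\phi_2)=T$, subject to $T\subseteq U$, $|T|$ even, $|U|\neq 1$, and a cut condition relative to $U$). This slack in the $\Z_3$-coordinate is precisely what lets the induction run directly, with no passage through $3$-edge-connectivity: one finds, via a maximality argument, a path $P$ with both ends in $U$; after applying induction to $G-E(P)$, one sets $\phi_2\equiv 1$ on $P$, extends $\phi_3$ greedily along $P$, and then adds a constant $t\in\Z_3$ to all edges of $P$ to make the two endpoint boundaries nonzero. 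That last step succeeds only because one needs those boundaries to be \emph{nonzero}, not to equal prescribed values---exactly the flexibility your exact-boundary formulation throws away.
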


This result combines with a theorem of Tutte \cite{WT} to show that every 2-edge-connected digraph has a nowhere-zero 6-flow (i.e. a ${\mathbb Z}$-flow with range a subset of $\{ \pm 1, \pm 2, \ldots, \pm 5\}$).  In this article we prove the following generalization of Seymour's theorem (set $T=U = \emptyset$ to derive Theorem \ref{6flow}).  Here $\mathrm{supp}(f)$ denotes the support of a function $f$ and for a graph $G$ and a set $X \subseteq V(G)$ we use $d(X)$ to denote the number of edges with exactly one end in $X$.

\begin{theorem}
\label{funny6}
Let $G = (V,E)$ be a connected digraph and let $T \subseteq U \subseteq V$ have $|T|$ even and $|U| \neq 1$.  Assume further that every  $\emptyset \neq V' \subset V$ with $V' \cap U = \emptyset$ satisfies $d(V') \ge 2$.  Then for $k=2,3$ there exist functions $\phi_k : E(G) \rightarrow  {\mathbb Z}_k$ satisfying the following properties:
\begin{itemize}
\item $(\phi_2(e), \phi_3(e)) \neq (0,0)$ for every $e \in E$,
\item $\mathrm{supp} \left( \partial \phi_2 \right) = T$, and
\item $\mathrm{supp} \left( \partial \phi_3 \right) = U$.
\end{itemize}
\end{theorem}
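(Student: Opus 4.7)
The plan is to argue by induction using an ear decomposition of $G$ relative to $U$. The cut condition in the hypothesis is precisely the condition (a standard Robbins/Mader-type result) under which $G$ can be constructed from the subgraph $G[U]$ by successively attaching \emph{ears}---paths or closed walks whose endpoints lie in the current partial graph and whose interior vertices are new (hence all lie in $V \setminus U$). I will induct on the number of ears with nonempty interior, stripping the last such ear at each step.

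The base case is $V = U$. The cut condition is vacuous, $G$ is connected, and since $|T|$ is even there is a $T$-join $F \subseteq E$; I take $\phi_2$ to be the indicator function of $F$. For $\phi_3$, the assumption $|U| \neq 1$ allows me to select a target boundary $f \colon V \to {\mathbb Z}_3$ that is nowhere zero on $U$ and sums to zero, and any $\phi_3$ realizing $\partial \phi_3 = f$ exists since $G$ is connected; the freedom to add a ${\mathbb Z}_3$-flow then lets me adjust $\phi_3$ so that it is nonzero on every edge of $E \setminus F$, securing the nowhere-$(0,0)$ property.

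For the inductive step, let $P$ be the last ear with nonempty interior, with endpoints $x, y$ and interior vertices in $V \setminus U$. Flow conservation at each interior vertex (in both ${\mathbb Z}_2$ and ${\mathbb Z}_3$) forces $\phi_2$ and $\phi_3$ to be constant along $P$, say with values $a \in {\mathbb Z}_2$ and $b \in {\mathbb Z}_3$. When $P$ is closed ($x = y$), any $(a,b) \neq (0,0)$ works, and I apply induction to $G - P$ with unchanged parameters $(T, U)$. When $P$ is open, I apply induction to $G - P$ with $T_0 := T$ (if $a = 0$) or $T_0 := T \triangle \{x, y\}$ (if $a = 1$), and $U_0 := U \cup (\{x, y\} \cap (V \setminus U))$; a routine check shows $(T_0, U_0)$ satisfies all the theorem's hypotheses on $G - P$ (correct parity of $T_0$, $|U_0| \neq 1$, and the cut condition, which is preserved because the stripped interior has no edges to the remainder outside of $P$), and the lift of $(\phi_2', \phi_3')$ by copying $(a, b)$ along $P$ gives the desired $(\phi_2, \phi_3)$.

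The hard part will be showing that $(a, b) \neq (0, 0)$ can always be chosen compatibly with the endpoint boundary conditions. A brief case analysis on whether each of $x, y$ lies in $U$ or $V \setminus U$ should suffice: when an endpoint lies in $V \setminus U$ we must set $b \neq 0$ (the target $\partial \phi_3'$ at that endpoint becomes $\mp b$ in $G - P$), while an endpoint in $U$ retains three possible target ${\mathbb Z}_3$-values, providing ample slack for any $b$. One last technical point is keeping $|U_0| \neq 1$ through the induction; this reduces to arranging (when $|U| = 0$) that the first ear be closed, so that the initial step adds no new element to $U_0$.
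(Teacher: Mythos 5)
Your plan runs into two genuine obstructions that the paper's argument is specifically engineered to avoid. The first and most serious is the endpoint compatibility issue you yourself flag as ``the hard part'': the induction hypothesis controls only the \emph{support} of $\partial \phi_3'$, not its values, so you cannot ``set the target $\partial\phi_3'$ at that endpoint to $\mp b$.'' If an endpoint $x$ of the stripped ear lies in $V\setminus U$, the lift forces $b=-\partial\phi_3'(x)$ for whatever value the induction happens to produce; if \emph{both} endpoints lie in $V\setminus U$ (which is unavoidable in general, since ears attach to interior vertices of earlier ears, all of which are in $V\setminus U$), the two forced values of $b$ can conflict, and there is no remaining freedom once $b$ is fixed along the ear. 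Even with one endpoint in $U$ and one outside, the forced $b$ may kill the boundary at the $U$-endpoint. Upgrading the induction to prescribe exact boundary values is a strictly stronger ($\Z_3$-connectivity-type) statement that fails under these weak hypotheses, so this is not a routine fix. The published proof dodges exactly this trap: via Menger plus a maximality argument it deletes a path $P$ both of whose ends lie in $U$ and in a \emph{common} component of $G-E(P)$, so at the ends only ``nonzero'' is required, and shifting all of $P$ by a constant $t\in\Z_3$ (each end forbidding at most one $t$) always succeeds.

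The second obstruction is that the ``routine check'' of the hypotheses for $G-P$ is not routine and can fail outright: $G-P$ may be disconnected (your ear decomposition starts from $G[U]$, which may be edgeless or disconnected, so ears do the connecting and removing one can disconnect the rest), a component may meet $U_0$ in exactly one vertex, and $T_0$ may have odd intersection with a component. For instance, with $G$ the path $u_1\,a\,u_2$, $U=\{u_1,u_2\}$, stripping the unique ear leaves two isolated vertices each meeting $U_0$ in one vertex. The paper handles these issues separately (proper $1$-separations, the $U=\emptyset$ case, and the proof that all components of $G-E(P)$ other than $H$ avoid $U$), and your sketch has no substitute for them. Two further gaps: the interior of ``the last ear with nonempty interior'' need not have degree $2$ in $G$, since trivial ears (chords) added afterwards may attach to it (e.g.\ $K_4$ with $U=\emptyset$), so $\phi_2,\phi_3$ are not forced constant along it and the stripping is not clean; and your base case $V=U$ is itself a nontrivial instance of the theorem --- ``the freedom to add a $\Z_3$-flow'' is vacuous when $G$ is a tree, so nonvanishing on $E\setminus F$ needs a real argument there. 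As it stands, the proposal is a different route from the paper's, but it does not go through without substantial new ideas at precisely the points where the paper's path-selection argument does its work.
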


Under the stronger hypothesis that $G$ is 3-edge-connected, a theorem of Jaeger et. al.~\cite{JLPT} shows that one may find a nowhere-zero ${\mathbb Z}_6$-valued function with any desired zero-sum boundary function.  The main novelty of our result is that the hypotheses are relatively weak and the result has a quick proof by induction.  In particular, we do not require the standard reductions  to 3-connected cubic graphs.  

Our notation is fairly standard.  For sets $X,Y$ we use $X \oplus Y = (X \setminus Y) \cup (Y \setminus X)$ to denote symmetric difference.  If $G$ is a graph and $(G_1,G_2)$ is a pair of subgraphs satisfying $E(G_1) \cup E(G_2) = E(G)$, $E(G_1) \cap E(G_2) = \emptyset$, and $|V(G_1) \cap V(G_2)| = k$, then we call $(G_1, G_2)$ a $k$-separation.  This separation is \emph{proper} if $V(G_1) \setminus V(G_2) \neq \emptyset \neq V(G_2) \setminus V(G_1)$.  Note that a graph with at least $k+1$ vertices is $k$-connected if and only if it has no proper $(k-1)$-separation.  

\begin{proof}[Proof of Theorem \ref{funny6}] We proceed by induction on $|E|$.  The case when $|V| \le 2$ holds by inspection, so we may assume $|V| \ge 3$.  If there exists $v \in V \setminus U$ with $\mathrm{deg}(v) = 2$, then the result follows by contracting an edge incident with $v$ (to eliminate this vertex) and applying induction.  So we may assume no such vertex exists.  Next suppose that $G$ has a proper 1-separation $(G_1, G_2)$ with $\{v \} = V(G_1) \cap V(G_2)$.  First suppose that $U \subseteq V(G_1)$.  In this case the result follows by applying the theorem inductively to $G_1$ with the given sets $T,U$ and to $G_2$ with $T=U=\emptyset$.  Next suppose that $U$ contains a vertex in both $V(G_1) \setminus \{v\}$ and $V(G_2) \setminus \{v\}$.  For $i=1,2$ let $U_i = (V(G_i) \cap U) \cup \{v\}$ and choose $T_i = T \cap V(G_i)$ or $T_i = (T \cap V(G_i)) \oplus \{v\}$ so that $|T_i|$ is even.  For $i=1,2$ apply the theorem inductively to $G_i$ with $T_i, U_i$ to obtain the functions $\phi_2^i$ and $\phi_3^i$.  Now taking $\phi_2 = \phi_2^1 \cup \phi_2^2$ and a suitable choice of $\phi_3 = \phi_3^1 \cup \pm \phi_3^2$ gives the desired functions for $G$.  (To see this note that by choosing $\pm \phi_3^2$ we may arrange for $\partial \phi_3(v)$ to be zero or nonzero as desired).  

By the above arguments, we may now assume that $G$ is 2-connected.  If $U= \emptyset$, choose an edge $e=uw$ and apply the theorem inductively to $G' = G-e$ with $T' = \emptyset$ and $U' = \{u,w\}$ to obtain $\phi_2'$ and $\phi_3'$.  Extend $\phi_2'$ to a function $\phi_2:E \rightarrow {\mathbb Z}_2$ by setting $\phi_2(e) = 0$.  Since $\sum_{v \in V} \partial \phi_3'  = 0$ we have $\partial \phi_3'(u) = - \partial_3'(w) = \pm 1$.  Therefore, we may extend $\phi_3'$ to a function $\phi_3 : E \rightarrow {\mathbb Z}_3$ by setting $\phi_3(e) = \pm 1$ so that $\partial \phi_3 = 0$, thus completing the proof in this case.   

Now we may assume $|U| \ge 2$.  By Menger's theorem we may choose a nontrivial path $P$ so that both ends of $P$ are in $U$ and furthermore, some component,  say $H$, of $G-E(P)$  contains both ends of $P$.  Over all such paths, choose one $P$ so that $H$ is maximal.  Suppose (for a contradiction) that some component $H' \neq H$ of $G-E(P)$ satisfies $V(H') \cap U \neq \emptyset$.  Choose $u \in V(H') \cap U$ and choose two internally vertex-disjoint paths $Q_1, Q_2 \subseteq H'$ starting at $u$ and ending in $V(P)$.  Now we may choose a nontrival path $P' \subseteq P \cup Q_1$ with one end $u$ and the other an end of $P$ so that $H \cup Q_2$ is contained in some connected component of $G-E(P')$, thus contradicting our choice of $P$.  Therefore, every component of $G-E(P)$ apart from $H$ contains no vertices in $U$.  Note that by our choice, every interior vertex of $P$ is in $V \setminus U$ (and thus has degree $\ge 3$).

Let $\{u_1, u_2\}$ be the ends of $P$ and let $G' = G - E(P)$.  Define $T' = T \oplus \{u_1, u_2\}$ and $U' = U \cup V(P)$.  Now apply the theorem inductively to each component of $G'$ with the corresponding restrictions of $T'$ and $U'$ to obtain $\phi_2': E(G') \rightarrow {\mathbb Z}_2$ and $\phi_3':E(G') \rightarrow {\mathbb Z}_3$.  Extend $\phi_2'$ to a function $\phi_2 : E \rightarrow {\mathbb Z}_2$ by defining $\phi_2(e) = 1$ for every $e \in E(P)$ and note that $\supp( \partial \phi_2 ) = T$ as desired.  By possibly reorienting we may assume that $P$ is a directed path with edges in order $e_1, \ldots, e_k$.  By greedily assigning values to these edges in order, we may extend $\phi_3'$ to a function $\phi_3 : E \rightarrow {\mathbb Z}_3$ with the property that $\partial \phi_3(v) = 0$ for every internal vertex $v$ of $P$.  Now the function $\phi_3$ satisfies the desired boundary condition at every vertex except possibly the ends of $P$.  Let $t \in {\mathbb Z}_3$ and modify $\phi_3$ by adding $t$ to $\phi_3(e)$ for every $e \in E(P)$.  This has no effect on the boundaries of the internal vertices of $P$, and for some $t \in {\mathbb Z}_3$ the resulting function will have nonzero boundary at both ends of $P$.  This gives us our desired functions $\phi_2$ and $\phi_3$.
\end{proof}

\end{document}